\newtheorem{theorem}{Theorem}
\newtheorem{lemma}{Lemma}
\newtheorem{definition}{Definition}
\theoremstyle{remark}
\def\Re{\mathop{\rm Re}\nolimits}
\def\Im{\mathop{\rm Im}\nolimits}
\def\id{\mathop{\rm id}\nolimits}
\begin{document}

\title[On $L$-special domains...]{On $L$-special domains with algebraic boundaries}

\author[M.~Borovikov]{Mikhail Borovikov}

\begin{abstract}
The concept of $L$-special domain appeared in the early 2000s. This
analytical characteristic of domains in the complex plane is related to the
problem on uniform approximation of functions on Carath\'eodory compacts in
$\mathbb{R}^2$ by polynomial solutions of homogeneous second-order elliptic
partial differential equations $Lu=0$ with constant complex coefficients. In
this paper, new properties and examples of $L$-special domains with algebraic
boundaries are obtained.
\end{abstract}

\thanks{The study was carried out with the financial support of the Ministry of Science and Higher Education of the Russian Federation in the framework of a scientific project under agreement No. 075-15-2025-013. 
Author is a member of the group “Leader” followed by Alexander Bufetov
which won the contest conducted by
Foundation for the Advancement of Theoretical Physics and Mathematics “BASIS”
and would like to thank its sponsors and jury.}

\address{
\hskip -\parindent Mikhail Borovikov${}^{1,2,3}$:
\newline \indent 1)~Faculty of Mechanics and Mathematics,
\newline \indent Lomonosov Moscow State University, Moscow, Russia;
\newline \indent 2)~Moscow Center for Fundamental and Applied Mathematics,
\newline \indent Lomonosov Moscow State University, Moscow, Russia;
\newline \indent 3)~St.~Petersburg State University, St.~Petersburg, Russia;
\newline \indent {\tt misha.borovikov@gmail.com}
}

\maketitle

\section{Introduction}

Let $L$ be a second-order elliptic partial differential operator in
$\mathbb{R}^2$ with constant complex coefficients, i.e.,
$Lf=af_{xx}+bf_{xy}+cf_{yy}$, $a,b,c\in\mathbb C$. The problem on uniform
approximability of functions on compact sets in $\mathbb R^2$ by $L$-analytic
polynomials (that is by polynomials satisfying the equation $Lf=0$)
attracted the interest of analysts since early 1990s. The necessary and
sufficient approximability conditions in this problem were obtained for
Carath\'eodory compact sets in terms of a special analytical characteristic
of bounded simply connected domains, which is expressed by the property of
the domain to be $L$-special.

Let us recall the corresponding definitions. A bounded domain $D\subset\mathbb R^2$ is
called a Carath\'eodory domain, if $\partial D=\partial D_\infty$, where
$D_{\infty}$ is the unbounded (connected) component of
$\mathbb{R}^2\setminus\overline{D}$. It can be readily verified that any
Carath\'eodory domain is simply connected and coincides with the interior of
its closure. Let $\varphi$ be a conformal mapping of the disc
$\mathbb{D}=\{z:|z|<1\}$ onto $D$. One says that a holomorphic function $f$
in $D$ belongs to the class $AC(D)$ if the function $f\circ\varphi$ can be
extended to a function that is continuous on $\mathbb{D}$ and absolutely
continuous on the unit circle. It is known that for every $f\in AC(D)$, for
every accessible boundary point $\zeta$ of $D$, and for every path
$\gamma$ lying in $D\cup\{\zeta\}$ and ending at $\zeta$, the limit of $f$
along $\gamma$ exists and is equal to the same value $f(\zeta)$, which is
called a boundary value of $f$ at $\zeta$.

Let $\lambda_1$ and $\lambda_2$ be the roots of the characteristic polynomial
$a\lambda^2+b\lambda+c=0$ for $L$. The (complex) numbers $\lambda_1$ and
$\lambda_2$ are not real due to ellipticity of $L$. We associate with $L$ the
non-degenerate real-linear transformations $T_k$, $k=1,2$ of the plane:
\begin{equation*}
T_k : z=x+iy \rightarrow x+\frac{1}{\lambda_k}y,\quad k=1,2.
\end{equation*}

\begin{definition}\label{def1}
A Carath\'eodory domain $D$ is called $L$-special, if there exist two
non-constant functions $F_1\in AC(T_1D)$ and $F_2\in AC(T_2D)$ such that
$F_1(T_1\zeta)=F_2(T_2\zeta)$ for every accessible boundary point
$\zeta\in\partial D$.
\end{definition}

If $D$ is a $L$-special domain, then the pair of functions $(F_1,F_2)$ taken
from Definition~\ref{def1} is called \textit{admissible} for $D$. Notice that, for a given $L$-special domain
$D$, the admissible pair is not uniquely determined.

Recall that a compact set $K\subset\mathbb R^2$ is called a Carath\'eodory
compact set, if $\partial K=\partial \widehat{K}$, where $\widehat{K}$ is the
union of $K$ and all bounded connected components of the set
$\mathbb{R}^2\setminus K$. In \cite{Z02,Z03,Z04,BFM24}
necessary and sufficient conditions on a Carath\'eodry compact set $K$ were
obtained in order that every function $f$ continuous on $K$ and satisfying the
equations $Lf=0$ on the interior $K^\circ$ of $K$ can be approximated
uniformly on $K$ with an arbitrary accuracy by $L$-analytic polynomials.
These conditions are formulated in terms of $L$-special domains. In the
papers cited above several conditions established in order that a given
domain $D$ is not $L$-special for certain operators $L$ of the type under
consideration; however, the concept of $L$-speciality itself is still not
well studied. In particular, no description of $L$-special domains is known
in terms of properties of conformal or univalent harmonic mappings of the
disc (onto the domain under consideration), and, moreover, only a few
explicit examples of such domains are discovered up to now. Our aim is to
present new construction of $L$-special domain with an algebraic boundary which
answers, in particular, the question posed in the early 2000s about the
existence of such domains different from ellipses. We also note that the
question of uniform approximability by $L$-analytic polynomials is closely
related to the questions of uniqueness and existence of solution to the
Dirichlet problem for the equation $Lf=0$ in bounded simply connected domains
in the plane; more details about these questions can be found in the 
\cite{B23,BF22,BFM24}.

\section{Main Results}

Let $\mathbb{C}[x,y]$ and $\mathbb{R}[x,y]$ denote the spaces of polynomials
in two variables with complex and real coefficients, respectively. Let
$\varGamma$ in $\mathbb{R}^2$ be a Jordan curve possessing the property
$\varGamma\subset\{(x,y) : P(x,y)=0\}$ for some $P\in\mathbb{C}[x,y]$. In
this case we say that $\varGamma$ is \textit{algebraic} and $P$
\textit{defines} $\varGamma$. The \textit{order} of an algebraic curve
$\varGamma$ is the smallest number $n$ such that there exists a polynomial of
degree $n$ defining $\varGamma$.

A Jordan domain is said to be a domain with an algebraic boundary, if it is the
interior of some closed Jordan algebraic curve. If a given Jordan domain $D$
is $L$-special, and if there is an admissible pair for $D$ consisting of
polynomials, then $D$ is obviously a domain with an algebraic boundary.

Recall that the elliptic operator $L$ under consideration is strongly
elliptic, if its characteristic roots $\lambda_1$ and $\lambda_2$ introduced
above have the opposite signs of imaginary parts. Since $L$-special domains
do not exist for strongly elliptic operators (see, for example,
\cite[Corollary~1]{Z02}), in what follows we deal with only not-strongly
elliptic $L$. For non-strongly elliptic operators $L$, there exists a
non-degenerate linear transformation of $\mathbb{R}^2$ that reduces $L$ to
the form
$$
L_\beta=c\overline{\partial}\partial_\beta
$$
where $c\in\mathbb{C}$, $c\neq0$, while
$\overline{\partial}=\frac12(\partial/\partial x+i\partial/\partial y)$ is
the usual Cauchy-Riemann operator and
$$
\partial_\beta:=\frac12\Bigr(\frac{\partial}{\partial x}+i\beta\frac{\partial}{\partial y}\Bigl)
$$
for some $\beta=\beta(L)\in(0,1)$.

Notice that the condition $0<\beta<1$ singles out precisely the class of
non-strongly elliptic operators, while every strongly elliptic $L$ under
consideration can be reduced to the operator of the form $L_\beta$ with
$\beta\in(-1,0)$.

Further we will assume that the operator $L$ under consideration already
has the form $L_\beta$ with $\beta\in(0,1)$. In this case, the
transformations of the plane $T_1$ and $T_2$ used in Definition~\ref{def1}
are $T_1=\id$ and $T_2=T_\beta : z\mapsto z_\beta=x+\frac{i}{\beta}y$.

Let us present a simple example of a $L$-special domain, which first appeared
in \cite{Z02}: this is the interior of the ellipse
$$
\Big\{(x_1,x_2) : x_1^2+\frac1\beta \,x_2^2=1\Big\}
$$
and one of the admissible pairs for this domain is $(F_1,F_2)$ where
$$F_1(z)=\frac{z^2} {1-\beta},\quad 
F_2(z_\beta)=1-\frac{\beta z_\beta^2}{1-\beta}\,.
$$
Indeed, putting $x+iy$ and $x+\frac{i}{\beta}y$ in places of $z$ and
$z_\beta$ into the equation $F_1(z)=F_2(z_\beta)$, we obtain exactly the
equation of the ellipse under consideration.

Let $n>2$ be an integer. The first result of the present paper states that
the equation $F_1(z)=F_2(z_\beta)$, where $F_1$ and $F_2$ are polynomials
with $\max(\deg(F_1),\deg(F_2))\leqslant n$, cannot define any algebraic
curve of order $n$. Thus, a domain with an algebraic boundary of order $n>2$ can
be $L$-special with an admissible pair of polynomials $(F_1,F_2)$ only in the
case when the maximal degree of $F_1$ and $F_2$ is greater than the order of
its boundary. 

\begin{theorem}\label{th2}
Let $L=L_\beta$ with $\beta\in(0,1)$. Let $D$ be a domain with an algebraic
boundary of order $n>2$ such that $D$ is $L$-special and suppose the pair of
polynomials $(F_1,F_2)$ to be an admissible pair for~$D$. Then $\max(\deg
F_1,\deg F_2)>n$.
\end{theorem}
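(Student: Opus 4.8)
The plan is to argue by contradiction: assume $\max(\deg F_1,\deg F_2)\le n$ and derive an impossibility from the interplay between the boundary relation $F_1(z)=F_2(z_\beta)$ and the reality of the defining polynomial of $\Gamma=\partial D$. First I would pass to the variables $z,\bar z$. Writing $x=(z+\bar z)/2$ and $y=(z-\bar z)/(2i)$ one finds $z_\beta=az-b\bar z$ with $a=\frac{1+\beta}{2\beta}$ and $b=\frac{1-\beta}{2\beta}$, so $a,b>0$, $a-b=1$ and $a/b=\frac{1+\beta}{1-\beta}>1$; in particular $a\ne b$. The virtue of this normalization is that the single complex boundary relation splits into two real ones: both $\Re\bigl(F_1(z)-F_2(z_\beta)\bigr)$ and $\Im\bigl(F_1(z)-F_2(z_\beta)\bigr)$ are real polynomials in $(x,y)$ of degree $\le\max(\deg F_1,\deg F_2)\le n$ that vanish on $\Gamma$.

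Next I would establish a uniqueness statement for the minimal defining polynomial. Since $\Gamma$ is real, the real and imaginary parts of any complex defining polynomial of degree $n$ are real polynomials of degree $\le n$ vanishing on $\Gamma$, at least one of which is nonzero and hence, by minimality of the order, of degree exactly $n$; so $\Gamma$ has a real defining polynomial $Q$ with $\deg Q=n$, which may be taken square-free. The key lemma is that every polynomial of degree $\le n$ vanishing on $\Gamma$ is a scalar multiple of $Q$: each irreducible factor of $Q$ whose zero set meets $\Gamma$ in infinitely many points is covered by a Zariski-dense arc of $\Gamma$, hence divides any such polynomial, and by minimality of $n$ these factors exhaust $Q$. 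Applying this to the two real polynomials above yields a genuine polynomial identity $F_1(z)-F_2(z_\beta)=\kappa\,Q$ with a constant $\kappa\ne0$ (nonzero because $F_1(z)=F_2(az-b\bar z)$ cannot hold identically for nonconstant $F_1,F_2$). Comparing degrees, this already forces $\max(\deg F_1,\deg F_2)=n$.

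The heart of the argument is then a comparison of the degree-$n$ leading forms in $z,\bar z$. Writing $\tilde Q^{(n)}=\sum_{r+s=n}d_{r,s}z^r\bar z^s$ for the complexified leading form of $Q$, reality of $Q$ is equivalent to $\overline{d_{r,s}}=d_{s,r}$. On the other side the leading form of $F_1(z)-F_2(z_\beta)$ equals $Az^n-B(az-b\bar z)^n=:\sum_{r+s=n}e_{r,s}z^r\bar z^s$, where $A,B$ are the (possibly zero) leading coefficients of $F_1,F_2$, not both zero; matching with $\kappa\tilde Q^{(n)}$ gives $e_{r,s}=\kappa d_{r,s}$, so reality of $Q$ becomes $\kappa\overline{e_{r,s}}=\bar\kappa\,e_{s,r}$. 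For each interior index $l\in\{1,\dots,n-1\}$ the coefficient $e_{n-l,l}=-B\binom{n}{l}a^{n-l}(-b)^l$ comes solely from $F_2$, and the relation $\kappa\overline{e_{n-l,l}}=\bar\kappa\,e_{l,n-l}$ forces, if $B\ne0$, the fixed ratio $\kappa\bar B/(\bar\kappa B)$ to equal $(-1)^n(a/b)^{2l-n}$. Since $a/b\ne1$, this cannot be independent of $l$ once there are at least two interior indices, i.e. once $n>2$; hence $B=0$. Then $A\ne0$ and the leading form reduces to a multiple of $z^n$, giving $d_{n,0}\ne0$ while $d_{0,n}=0$, in contradiction with $\overline{d_{n,0}}=d_{0,n}$. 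This contradiction shows $\max(\deg F_1,\deg F_2)>n$.

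I expect the main obstacle to be the uniqueness lemma for the minimal defining polynomial: one must handle the case in which the Zariski closure of the Jordan curve is reducible (several algebraic arcs meeting in finitely many points) and justify carefully, via Zariski density of arcs in irreducible components together with minimality of $n$, that no polynomial of degree $\le n$ other than multiples of $Q$ vanishes on $\Gamma$. Everything after that is a finite computation whose only structural inputs are $a\ne b$ and the availability of two interior exponents $l$, which is precisely where the hypothesis $n>2$ enters (and explains why the ellipse, of order $2$, is not excluded).
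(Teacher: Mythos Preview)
Your argument is correct and follows the same overall strategy as the paper: assume $\max(\deg F_1,\deg F_2)\le n$, invoke the uniqueness of the minimal real defining polynomial (the paper's Lemma~\ref{l1}) to upgrade the boundary relation $F_1(z)=F_2(z_\beta)$ to a polynomial identity with a real right-hand side, and then derive a contradiction from the reality constraint on the degree-$n$ homogeneous part. The difference is purely in the bookkeeping of that last step. The paper keeps the pair $(z,z_\beta)$, writes the top form as $(\alpha_1+i\alpha_2)z^n+(\alpha_3+i\alpha_4)z_\beta^n$, and kills the four unknowns by equating to zero the imaginary parts of the $(x,y)$-coefficients at $x^n$, $x^{n-1}y$, $xy^{n-1}$, $y^n$, with a case split on the parity of $n$. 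You instead pass to $(z,\bar z)$, where reality becomes the Hermitian symmetry $\overline{d_{r,s}}=d_{s,r}$; this lets you use all interior indices $l\in\{1,\dots,n-1\}$ at once, avoids the parity split, and makes explicit that the hypothesis $n>2$ enters exactly as the existence of two distinct interior indices (which is why the ellipse survives). Both routes rest on the same two structural facts---the uniqueness lemma and $\beta\ne1$ (equivalently $a\ne b$)---so neither is more general, but your packaging is a bit cleaner.
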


For proving Theorem~\ref{th2} we need the following two (most likely,
commonly known) lemmas. The first one is as follows.
\begin{lemma}\label{l1}
Let $D$ be a domain  with an algebraic
boundary $\varGamma$ and a polynomial $P\in\mathbb{C}[x,y]$, $\deg P=m$, define $\varGamma$. Then there exists $R\in\mathbb{R}[x,y]$, $\deg R=n$, such that $R$
also defines $\varGamma$ and $P$ is divisible by $R$, where $n$ is the order
of the boundary of $D$. In particular, if $m=n$, then $P=\gamma R$, where
$\gamma$ is a complex number.
\end{lemma}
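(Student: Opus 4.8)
The plan is to study the ideal of all polynomials vanishing on $\varGamma$ and to exploit the fact that $\varGamma$, as a Jordan curve, is an infinite perfect subset of $\mathbb{R}^2$ (it has no isolated points). I would fix from the outset a polynomial $R$ of minimal degree $n$ defining $\varGamma$ (one exists by definition of the order), and reduce everything to the single claim that \emph{$R$ divides every polynomial vanishing on $\varGamma$}. Granting this claim, the lemma follows quickly: since the given $P$ (of degree $m$) vanishes on $\varGamma$, we get $R\mid P$; and if $m=n$ then $\deg(P/R)=0$, so $P=\gamma R$ with $\gamma\in\mathbb{C}$. First I would note that such a minimal $R$ is automatically squarefree: writing $R=R_1^{a_1}\cdots R_r^{a_r}$ with the $R_i$ distinct irreducibles in the UFD $\mathbb{C}[x,y]$, the squarefree part $R_1\cdots R_r$ has the same zero set, hence also defines $\varGamma$, so by minimality of $\deg R$ all $a_i=1$.

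The heart of the argument is the divisibility claim, carried out factor by factor. For each $i$ I would first show that $\{R_i=0\}\cap\varGamma$ is infinite. Indeed, if it were finite, then $R/R_i=\prod_{j\ne i}R_j$ would vanish on $\varGamma$ outside a finite set; since $\varGamma$ has no isolated points, that finite set is nowhere dense in $\varGamma$, so by continuity $R/R_i$ would vanish on all of $\varGamma$, contradicting the minimality of $\deg R$. Now let $P$ vanish on $\varGamma$. For each $i$ the polynomials $R_i$ and $P$ share all the (infinitely many) points of $\{R_i=0\}\cap\varGamma\subseteq\{P=0\}$ as common zeros. Two polynomials in $\mathbb{C}[x,y]$ that are coprime have only finitely many common zeros (via the resultant in one variable, or Bézout), so $R_i$ and $P$ are not coprime; as $R_i$ is irreducible, $R_i\mid P$. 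Since the $R_i$ are pairwise coprime, $R=\prod_i R_i\mid P$, which is the claim.

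It remains to upgrade $R$ to a \emph{real} polynomial of degree $n$. Because $\varGamma\subset\mathbb{R}^2$, evaluating $\Re R=\tfrac12(R+\overline R)$ and $\Im R=\tfrac1{2i}(R-\overline R)$ (with coefficientwise conjugation) at a real point $(x,y)\in\varGamma$ gives $\Re(R(x,y))$ and $\Im(R(x,y))$, both zero. Thus $\Re R,\Im R\in\mathbb{R}[x,y]$ are real polynomials vanishing on $\varGamma$ of degree $\le n$; since $R=\Re R+i\,\Im R\ne0$, at least one of them is nonzero, hence defines $\varGamma$ and therefore has degree exactly $n$, i.e.\ is itself a minimal real defining polynomial. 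Replacing $R$ by this one, I obtain $R\in\mathbb{R}[x,y]$ with $\deg R=n$, $R$ defining $\varGamma$, and $R\mid P$ by the divisibility claim applied to this new $R$. (The same divisibility claim, applied to two minimal defining polynomials, also yields that the minimal defining polynomial is unique up to a nonzero complex scalar, which reproves the $m=n$ conclusion.)

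The main obstacle I anticipate is the step guaranteeing that \emph{every} irreducible factor of the minimal polynomial genuinely lies on $\varGamma$ rather than meeting it in finitely many points; this is where the topological input that a Jordan curve is perfect is indispensable, and it must be combined carefully with the algebraic input that coprime bivariate complex polynomials have finite common zero sets. The conjugation and degree bookkeeping at the end are routine by comparison.
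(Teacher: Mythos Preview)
Your argument is correct and relies on the same algebraic input as the paper---that two coprime polynomials in $\mathbb{C}[x,y]$ have only finitely many common zeros---but the organization is genuinely different. The paper never factors $R$ into irreducibles; instead it immediately splits any complex defining polynomial into real and imaginary parts and works with $\gcd$'s over $\mathbb{R}[x,y]$: since $\varGamma$ has order $n$ and lies in $\{\Re R=0\}\cap\{\Im R=0\}$, the $\gcd$ of $\Re R$ and $\Im R$ must already have degree $n$, forcing $R$ to be a complex scalar times a real polynomial $R_1$; the same $\gcd$ trick applied to $\Re P$, $\Im P$, and $R_1$ then gives $R_1\mid P$. Your route instead factors a minimal $R$ into irreducibles over $\mathbb{C}$, uses the perfectness of the Jordan curve to guarantee that each irreducible factor meets $\varGamma$ infinitely often, applies B\'ezout factor by factor, and only passes to real coefficients at the end via $\Re R$, $\Im R$. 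Your approach is a little longer but more transparent: it makes the topological hypothesis on $\varGamma$ explicit and avoids the paper's somewhat informal step about ``an intersection containing an algebraic curve of order greater than~$n$''. The paper's approach is terser and bypasses squarefreeness and the factor-by-factor analysis entirely.
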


Let $n>2$ and $P\in\mathbb{C}[x,y]$. Since
\begin{equation*}
x=\frac{z-\beta z_\beta}{1-\beta},\quad y=i\frac{\beta z- \beta z_\beta}{1-\beta},
\end{equation*}
the coefficients of the polynomial
$Q(z,z_\beta)=P\bigl(x(z,z_\beta),y(z,z_\beta)\bigr)\in\mathbb{C}[z,z_\beta]$
depend linearly on the coefficients of $P$. Moreover, the coefficients of $Q$
at monomials of degree $k$ depend only on the coefficients of $P$ at
monomials of degree $k$. Thus, we can define a linear operator $S_\beta$
acting in the algebra $\mathbb{C}[x,y]$ that maps the polynomial $P$ to the
polynomial $Q$ according to the rule described above. If we substitute the
expressions for $z$ and $z_\beta$ in terms of $x$ and $y$ into some
polynomial $\widetilde{Q}(z,z_\beta)$, then we can define by the same way the
operator $\widetilde{S_\beta}$, which maps the polynomial $\widetilde{Q}$ to
$\widetilde{P}(x,y)=\widetilde{Q}\bigl(z(x,y),z_\beta(x,y)\bigr)$. Using this
notation we can state the second aforementioned lemma.
\begin{lemma}\label{l2}
The operator $S_\beta$ is an automorphism of $\mathbb{C}[x,y]$ and
$\widetilde{S_\beta}$ is the inverse to $S_\beta$.
\end{lemma}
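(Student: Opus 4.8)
The plan is to recognize $S_\beta$ and $\widetilde{S_\beta}$ not merely as linear operators but as $\mathbb{C}$-algebra homomorphisms, and then to reduce the entire statement to a check on the two generators $x$ and $y$. First I would observe that $S_\beta$ is induced by substituting the linear expressions $x=x(z,z_\beta)$ and $y=y(z,z_\beta)$ displayed just before the lemma into the argument of $P$. Since substitution of fixed polynomials for the variables is compatible with both addition and multiplication of polynomials and leaves constants unchanged, the map $P\mapsto Q$ is a homomorphism of $\mathbb{C}$-algebras; in particular it is $\mathbb{C}$-linear, which recovers the operator already described. The same reasoning applies verbatim to $\widetilde{S_\beta}$, which substitutes $z=x+iy$ and $z_\beta=x+\frac{i}{\beta}y$. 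Thus both maps are $\mathbb{C}$-algebra endomorphisms of the polynomial algebra in two indeterminates, with $\mathbb{C}[z,z_\beta]$ and $\mathbb{C}[x,y]$ identified as abstract polynomial rings.

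The key step is then that an endomorphism of $\mathbb{C}[x,y]$ is completely determined by its values on the generators $x$ and $y$, so that two such endomorphisms are mutually inverse as soon as each composite fixes the generators. Hence it suffices to compute $\widetilde{S_\beta}\bigl(S_\beta(x)\bigr)$ and $\widetilde{S_\beta}\bigl(S_\beta(y)\bigr)$, together with the two symmetric composites evaluated on $z$ and $z_\beta$. Carrying out these substitutions, e.g. using $S_\beta(x)=\frac{z-\beta z_\beta}{1-\beta}$ and then replacing $z,z_\beta$ by $x+iy,\,x+\frac{i}{\beta}y$, returns $x$, and likewise for the remaining generators. These identities are nothing but the assertion that the linear change of variables $(x,y)\mapsto(z,z_\beta)$ and its stated inverse are genuinely inverse to one another, which is guaranteed because the associated $2\times2$ coefficient matrix has determinant proportional to $1-\beta\neq0$ under the hypothesis $\beta\in(0,1)$.

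Once both composites are seen to act as the identity on the generators, the homomorphism property forces each of them to be the identity on all of $\mathbb{C}[x,y]$. Consequently $S_\beta$ is bijective with two-sided inverse $\widetilde{S_\beta}$, that is, an automorphism, as claimed. I expect no serious obstacle here: the only points requiring care are confirming multiplicativity (so that verification on generators is legitimate) and the invertibility of the underlying linear substitution, the latter reducing to $1-\beta\neq0$. The degree-preservation property emphasized in the discussion before the lemma is not needed for the automorphism claim, though it follows for free, since the substitutions are homogeneous of degree one and therefore respect the grading of $\mathbb{C}[x,y]$.
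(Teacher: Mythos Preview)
Your argument is correct. You take the standard algebraic route: recognize $S_\beta$ and $\widetilde{S_\beta}$ as substitution homomorphisms of $\mathbb{C}$-algebras, then verify on the two generators that each composite is the identity, invoking $1-\beta\neq0$ for the invertibility of the underlying linear change of variables. The paper proceeds in the opposite order and by a different mechanism: it first shows $S_\beta\widetilde{S_\beta}P=P$ for arbitrary $P$ by observing that the two polynomials agree as functions $\mathbb{R}^2\to\mathbb{R}^2$ and hence have the same coefficients, and only then establishes multiplicativity by a second application of the same ``equal as functions'' trick to $QT$ versus $\widetilde{S_\beta}(S_\beta Q\cdot S_\beta T)$. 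Your approach is cleaner and more self-contained, since multiplicativity of substitution is immediate and the check on generators is a two-line computation; the paper's approach trades that for an appeal to the identity principle for polynomials, and in fact literally verifies only one of the two composites (the other being implicit by symmetry or by finite-dimensionality of each graded piece). Either way the content is the same elementary fact that an invertible linear change of coordinates induces an algebra automorphism of the polynomial ring.
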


The result of Theorem \ref{th2} can be extended for polynomials whose degree
is greater than the order of the curve and the following statement holds.
\begin{theorem}\label{th3}
Let $n>2$ and $k$, $1\leqslant k\leqslant n$ be integers such that $n$ is not
divisible by $k$. Let $D$ be a domain with an algebraic boundary such that the
order of $\varGamma=\partial D$ is $n$ and $\varGamma$ is defined by the
equation $P(x,y)=c$, where $P$ is a homogeneous polynomial of degree $n$. If
$D$ is $L$-special and if the pair of polynomials $(F_1,F_2)$ is an
admissible pair for $D$, then $\max(\deg F_1,\deg F_2)\neq n+k$.
\end{theorem}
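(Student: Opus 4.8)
The plan is to mirror the reduction used for Theorem~\ref{th2}, but to exploit the extra hypothesis that $P$ is homogeneous, which pins down the graded structure of the factorisation. Suppose, contrary to the claim, that $\max(\deg F_1,\deg F_2)=n+k$, and put $G(z,z_\beta)=F_1(z)-F_2(z_\beta)$, a polynomial of degree $N=n+k$. By Lemma~\ref{l2} the polynomial $\widetilde G=\widetilde{S_\beta}(G)$ again has degree $N$, and since $F_1(T_1\zeta)=F_2(T_2\zeta)$ at every accessible boundary point, $G$ vanishes on $\varGamma$ in the $(z,z_\beta)$ variables, so $\widetilde G$ defines $\varGamma$. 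As $P-c$ is a polynomial of degree $n$ defining $\varGamma$, Lemma~\ref{l1} yields $(P-c)\mid\widetilde G$, say $\widetilde G=(P-c)\,Q$ with $\deg Q=k$. Applying the automorphism $S_\beta$ and using $S_\beta(P-c)=S_\beta(P)-c$, I arrive at the central identity
\begin{equation*}
F_1(z)-F_2(z_\beta)=\bigl(\Pi(z,z_\beta)-c\bigr)\,\Psi(z,z_\beta),
\end{equation*}
in which $\Pi=S_\beta(P)$ is homogeneous of degree $n$, $\Psi=S_\beta(Q)$ has degree $k$, and $c\neq0$ (were $c=0$, the set $\{P=0\}$ would be a union of lines, not a Jordan curve).

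Writing $[\,\cdot\,]_d$ for the homogeneous component of degree $d$, the decisive remark is that the left-hand side contains no monomial $z^iz_\beta^j$ with $i,j\geq1$; hence the same holds for $(\Pi-c)\Psi$, so every homogeneous component of degree $\geq1$ is a pure binomial $\alpha z^d+\gamma z_\beta^d$. Since $k<n$, the degree-$k$ component equals $-c\,[\Psi]_k$ and the degree-$(n+k)$ component equals $\Pi\,[\Psi]_k$; therefore
\begin{equation*}
[\Psi]_k=a_kz^k+b_kz_\beta^k,\qquad \bigl(a_kz^k+b_kz_\beta^k\bigr)\,\Pi=\alpha z^{n+k}+\gamma z_\beta^{n+k}.
\end{equation*}

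I would then show the second identity is impossible. If $a_k=0$ or $b_k=0$, then $\Pi$ is a scalar multiple of $z^n$ or $z_\beta^n$, so $P$ is a scalar multiple of $(x+iy)^n$ or $(x+\tfrac i\beta y)^n$ and $\{P=c\}$ is a finite point set, contradicting that $\varGamma$ is a Jordan curve. Assuming $a_kb_k\neq0$ and writing $\Pi=\sum_{i=0}^np_iz^iz_\beta^{n-i}$, comparison of the coefficients of $z^mz_\beta^{n+k-m}$ for $1\leq m\leq n+k-1$ gives $p_m=0$ for $1\leq m\leq k-1$, the recurrence $p_m=-(a_k/b_k)\,p_{m-k}$ for $k\leq m\leq n$, and $p_i=0$ for $n-k+1\leq i\leq n-1$. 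Here the hypothesis $k\nmid n$ is used: writing $n=Lk+r$ with $1\leq r\leq k-1$, the multiple $Lk$ lies in the forbidden window $[\,n-k+1,\,n-1\,]$, forcing $p_{Lk}=0$, while iterating the recurrence downward gives $p_{Lk}=(-a_k/b_k)^Lp_0$; hence $p_0=0$, and together with $p_1=\dots=p_{k-1}=0$ this forces every $p_i=0$, i.e.\ $\Pi\equiv0$, contradicting $\deg\Pi=n$.

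I expect the only real work to lie in the bookkeeping of the last step: identifying precisely which coefficient comparisons survive at the two ends of the range and checking that $k\nmid n$ places a multiple of $k$ inside the upper vanishing window. This is the single point at which the divisibility hypothesis intervenes, and the argument indeed breaks down when $k\mid n$, since then $Lk=n$ falls outside that window and $p_0$ is left free. The homogeneity of $P$ is essential (it is what yields the clean graded identity), whereas the Jordan-curve hypothesis is needed only to secure $c\neq0$ and to rule out the degenerate case $a_kb_k=0$.
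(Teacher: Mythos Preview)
Your argument is correct and follows the same route as the paper: factor $G=(S_\beta P-c)\,S_\beta Q$ via Lemma~\ref{l1} and Lemma~\ref{l2}, observe that the degree-$k$ and degree-$(n+k)$ homogeneous parts of $G$ are diagonal, and extract a contradiction from $k\nmid n$. The only difference is cosmetic: where you run the coefficient recursion by hand (and handle the degenerate cases $c=0$, $a_kb_k=0$ explicitly), the paper packages the final step as Lemma~\ref{l3}, noting that the nonzero diagonal polynomial $G_{(n+k)}$ is divisible by the diagonal polynomial $(S_\beta R)_{(k)}$, which forces $k\mid(n+k)$ and hence $k\mid n$.
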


A special type of homogeneous polynomials appears in the proof of this
theorem.
\begin{definition}
A homogeneous polynomial $F(x,y)$ is said to be diagonal, if it has the
form $ax^n+by^n$ for some $a,b\in\mathbb{C}$.
\end{definition}
We need the following simple lemma concerning diagonal polynomials.
\begin{lemma}\label{l3}
If a non-zero diagonal polynomial $P$ of degree $n$ is divisible by a
diagonal polynomial $Q$ of degree $k$, then $n$ is divisible by $k$.
\end{lemma}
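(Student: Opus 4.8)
The plan is to exploit the rotational symmetry that a diagonal polynomial possesses. Write $P=ax^{n}+by^{n}$ and $Q=cx^{k}+dy^{k}$; since $P\neq 0$ and $\deg Q=k$ we have $(a,b)\neq(0,0)$ and $(c,d)\neq(0,0)$. Let $\omega=e^{2\pi i/k}$ be a primitive $k$-th root of unity, and let $\sigma$ be the algebra automorphism of $\mathbb{C}[x,y]$ induced by $(x,y)\mapsto(x,\omega y)$. The key observation is that $\sigma$ fixes $Q$, because $\sigma Q=cx^{k}+d\omega^{k}y^{k}=Q$. Intuitively, the projective roots of a diagonal polynomial of degree $m$ form an orbit under multiplication by $m$-th roots of unity (a ``regular $m$-gon''), and the claim is that a $k$-fold symmetric root set can sit inside an $n$-fold symmetric one only when $k\mid n$; the automorphism $\sigma$ is the clean way to package this.

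First I would translate divisibility into a symmetry statement. Writing $P=QR$ and applying $\sigma$ gives $\sigma P=(\sigma Q)(\sigma R)=Q\cdot\sigma R$, so $Q$ also divides $\sigma P=ax^{n}+b\omega^{n}y^{n}$. Subtracting, $Q$ divides $P-\sigma P=b(1-\omega^{n})\,y^{n}$. Running the identical argument with the automorphism $(x,y)\mapsto(\omega x,y)$, which likewise fixes $Q$, shows that $Q$ divides $a(1-\omega^{n})\,x^{n}$ as well.

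Next I would argue by contradiction: suppose $k\nmid n$, so that $\omega^{n}\neq 1$. From $Q\mid b(1-\omega^{n})y^{n}$ we get, since $1-\omega^{n}\neq 0$, that either $b=0$ or $Q\mid y^{n}$; but the only linear factor of $y^{n}$ is $y$, so $Q\mid y^{n}$ forces $Q$ to be a scalar multiple of $y^{k}$, i.e. $c=0$. Thus $b=0$ or $c=0$, and symmetrically $a=0$ or $d=0$. Combining these alternatives with $(a,b)\neq(0,0)$ and $(c,d)\neq(0,0)$ leaves only the two configurations $P=ax^{n},\,Q=cx^{k}$ and $P=by^{n},\,Q=dy^{k}$.

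The point requiring the most care is precisely this degenerate monomial case, which is the real obstacle: in it the divisibility $Q\mid P$ holds for every $k\leqslant n$, so the conclusion $k\mid n$ can genuinely fail (for instance $x^{2}\mid x^{3}$ while $2\nmid 3$). The argument therefore only goes through once the pure-monomial instances are ruled out, which happens as soon as either $P$ or $Q$ involves both variables nontrivially (both of its coefficients nonzero): in that situation the two alternatives above are incompatible with $(c,d)\neq(0,0)$ (respectively $(a,b)\neq(0,0)$), yielding the contradiction and hence $\omega^{n}=1$, i.e. $k\mid n$. I would accordingly either record this nondegeneracy as a hypothesis or verify from the proof of Theorem~\ref{th3} that the diagonal polynomials to which the lemma is applied are never bare monomials; the rest is the short symmetry computation above.
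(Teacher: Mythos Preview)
Your rotation-automorphism argument is exactly the paper's one-sentence proof unpacked: the paper merely says the result ``follows directly from the fact that the group of roots of $z^{k}-1$ is a subgroup of the roots of $z^{n}-1$,'' which is the same cyclic symmetry your map $\sigma$ encodes. So the approach is the same, just written out more carefully.

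You are also right to flag the monomial case. The lemma as literally stated is false (your example $x^{2}\mid x^{3}$ with $2\nmid 3$ is a genuine counterexample), and the paper's sketch silently assumes both coefficients nonzero. In the sole application, inside the proof of Theorem~\ref{th3}, the relevant diagonal polynomial $G_{(n+k)}$ is in fact never a bare monomial: if it were, the factorization $G_{(n+k)}=S_{\beta}P\cdot S_{\beta}R_{(k)}$ would force $S_{\beta}P$ to be a scalar multiple of $z^{n}$ or of $z_{\beta}^{n}$, whence $P$ would be a scalar multiple of $(x+iy)^{n}$ or of $(x+i\beta^{-1}y)^{n}$, and neither is a nonzero real polynomial for $n\geqslant 1$. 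So the nondegeneracy you call for is present in the intended use, but the paper does not make this explicit, and your caution is well placed.
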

The proof of this lemma follows directly from the fact that the group of
roots of the polynomial $z^k-1$ is a subgroup of the roots of the polynomial
$z^n-1$.

Now we will describe the main result of the paper, it gives a new example of $L$-special
domain with an algebraic boundary.

\begin{theorem}\label{th4}
There exists an elliptic operator $L$ of the form $L_\beta$ with
$\beta\in(0,1)$, and a domain $D$ with the algebraic boundary $\varGamma$ of
the order $4$, such that $D$ is $L$-special with an admissible pair
consisting of polynomials.
\end{theorem}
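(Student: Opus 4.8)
The plan is to recast $L$-speciality with a polynomial admissible pair as a divisibility condition, and then to exhibit an explicit solution. Since here $T_1=\id$ and $T_2=T_\beta$, a pair of polynomials $(F_1,F_2)$ is admissible for $D$ exactly when $F_1(x+iy)-F_2(x+\tfrac{i}{\beta}y)$ vanishes on $\varGamma=\partial D$. Fix a real polynomial $R_0$ defining $\varGamma$; by Lemma~\ref{l1} it may be taken of degree equal to the order $4$, and for an order-$4$ curve $R_0$ is irreducible over $\mathbb{R}$ (otherwise $\varGamma$ would lie on a curve of smaller degree). Then the real points of $\{R_0=0\}$ are Zariski dense in the complex curve, so the vanishing of $F_1(x+iy)-F_2(x+\tfrac{i}{\beta}y)$ along $\varGamma$ forces both its real and imaginary parts to be divisible by $R_0$; that is,
\[
R_0(x,y)\ \big|\ F_1(x+iy)-F_2\!\left(x+\tfrac{i}{\beta}y\right)\quad\text{in }\mathbb{C}[x,y].
\]
Applying the automorphism $S_\beta$ of Lemma~\ref{l2} and writing $\widetilde R_0:=S_\beta(R_0)$, this is equivalent to $\widetilde R_0(z,z_\beta)\mid F_1(z)-F_2(z_\beta)$ in $\mathbb{C}[z,z_\beta]$. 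Thus I would search for $\beta\in(0,1)$ and a real quartic $R_0$ defining a Jordan curve such that $\widetilde R_0$ divides a \emph{split} polynomial, i.e.\ one of the form $A(z)+B(z_\beta)$; taking $F_1:=A$ and $F_2:=-B$ then yields the admissible pair.

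To carry out the search I would first use the results already proved to fix the degrees. By Theorem~\ref{th2} no admissible pair of degree $\le 4$ exists, so $A(z)+B(z_\beta)$ must have degree at least $5$; writing $A(z)+B(z_\beta)=\widetilde R_0\cdot\widetilde H=S_\beta(R_0 H)$ with $\deg\widetilde H\ge 1$, the problem becomes the cancellation of all \emph{mixed} monomials $z^iz_\beta^j$ ($i,j\ge 1$) in this product. To make the resulting system finite and symmetric I would impose central symmetry, so that $R_0$ and hence $F_1,F_2$ are even, and I would build $R_0$ from the single quadratic whose $S_\beta$-image already splits, namely $\rho=x^2+\tfrac1\beta y^2$ with $S_\beta(\rho)=(z^2-\beta z_\beta^2)/(1-\beta)$ — the very relation behind the ellipse example. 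The degree of the pair would be chosen greater than $4$ and, in case the curve turns out to be of the form (homogeneous quartic)$\,=\,$const, different from $4+3=7$, to avoid the obstruction of Theorem~\ref{th3}; the smallest natural candidates are degrees $5$ and $6$. Solving the mixed-term cancellation equations, with $\beta$ as a free parameter, should then determine explicit coefficients.

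The hard part will not be the algebra but the geometry of the resulting real locus: the divisibility relations are purely formal and guarantee only the identity $F_1=F_2$ along $\{R_0=0\}$, not that this set is a genuine simple closed curve. Hence, after fixing a concrete numerical $\beta$ and concrete coefficients, I would have to verify directly that $\{R_0=0\}$ is a bounded, smooth, embedded Jordan curve bounding a Carath\'eodory domain — definiteness of the top-degree form yielding boundedness, and the absence of real singular points yielding embeddedness — and that its order is exactly $4$ rather than $2$, i.e.\ that $R_0$ is not a conic in disguise, which is precisely where Lemma~\ref{l1} and the diagonal-polynomial Lemma~\ref{l3} control the top-degree homogeneous part. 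Non-constancy of $F_1,F_2$ is then automatic, since $\widetilde R_0$ genuinely involves $z_\beta$ and so cannot divide a polynomial in $z$ alone. I expect this final step — producing explicit parameters for which all of these geometric conditions hold at once — to be the crux of the proof.
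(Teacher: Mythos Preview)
Your divisibility reformulation via $S_\beta$ is exactly the paper's setup, and your degree analysis using Theorems~\ref{th2} and~\ref{th3} is on target. The gap is that you outline a search without executing it: Theorem~\ref{th4} is an existence statement, and a proof must either exhibit concrete parameters or supply an abstract existence argument; your proposal does neither. The paper provides the missing mechanism. It takes $F_1(z)=Cz^5-z$ and $F_2(z_\beta)=C\gamma^5z_\beta^5-\gamma z_\beta$, so that
\[
F_1(z)-F_2(z_\beta)=(z-\gamma z_\beta)\bigl(G(z,z_\beta)-1\bigr),\qquad
G=C\prod_{k=1}^{4}\bigl(z-e^{2\pi i k/5}\gamma z_\beta\bigr),
\]
and then matches these four linear factors, one by one, against the four complex linear factors of the homogeneous quartic $P(x,y)=x^4+(2\beta^{-1}-4\alpha^2)x^2y^2+\beta^{-2}y^4$. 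This linear-factor matching against fifth roots of unity---not a generic ``cancel the mixed monomials'' system---is the decisive idea you are missing. It reduces everything to a handful of scalar conditions: two of them determine $\alpha$ and $\gamma$ as explicit functions of $\beta$, but the last is an equation in $\beta$ alone that the paper settles only by the intermediate value theorem, evaluating numerically at $\beta=0.01$ and $\beta=0.1$ and checking a sign condition on imaginary parts. So $\beta$ is obtained non-constructively; your expectation that ``solving the mixed-term cancellation equations \ldots\ should then determine explicit coefficients'' understates what is actually required.

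One tactical choice in your plan would in fact block this solution. You assert that central symmetry of $R_0$ forces $F_1,F_2$ to be even; it does not---odd $F_1,F_2$ give an equally centrally symmetric zero locus---and the paper's admissible pair is \emph{odd} of degree~$5$. Committing to even $F_1,F_2$ would push you to degree at least~$6$ and forfeit the factorisation of $z^5-\gamma^5z_\beta^5$ that drives the whole construction. Finally, the geometry you flag as the ``hard part'' is dispatched briefly in the paper (the condition $0<\alpha<\beta^{-1/2}$ guarantees a bounded component, and irreducibility of $P-1$ for $\alpha>0$ gives order exactly~$4$); the substantive work is the algebra and the intermediate-value step, not the verification that the curve is Jordan.
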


The idea of the proof of Theorem~\ref{th4} is the following: we take the
domain bounded by the fourth degree algebraic curve, defined by the
polynomial $P(x,y)-1$, where
\begin{equation}\label{pol}
P(x,y)=x^4+\Bigl(\frac{2}{\beta}-4\alpha^2\Bigr)x^2y^2+\frac{1}{\beta^2}\,y^4
\end{equation}
for some $0<\alpha<1/\sqrt{\beta}$. The condition $\alpha<1/\sqrt{\beta}$
guarantees that the complement to the level curve of the specified polynomial
has a bounded connected component, which is the desired domain. This curve is
of degree $4$, since the polynomial $P(x,y)-1$ is irreducible when
$\alpha>0$. An admissible pair of polynomials $(F_1,F_2)$ for the
corresponding domain can be represented in the following form:
$F_1(z)=Cz^5-z$, $F_2(z_\beta)=C\gamma^5z_\beta^5-\gamma z_\beta$ for some
$C,\gamma\in\mathbb{C}$. Then the equality $F_1(z)=F_2(z_\beta)$ can be
rewritten as  $(z-\gamma z_\beta)(G(z,z_\beta)-1)=0$ for
$G(z,z_\beta)=C(z^5-\gamma^5z_\beta^5)\big/(z-\gamma z_\beta)$. Thus, if we
will show that $S_\beta P=G$, then the theorem will be proven.

Thus, our aim is to find the constants $C,\gamma,\alpha,\beta$ such that the
equality $S_\beta P=G$ holds. To do that we, firstly, find $C$ and $\gamma$
as functions of $\alpha$ and $\beta$, then we find $\alpha$ as a function of
$\beta$, and finally we find $\beta$. Observe, that adapting this method to
curves of higher order faces the problem that $\beta$ is found implicitly
as a solution of some equation, so in the case of curves of order greater
than 4, this method leads already to several equations on $\beta$, which have
no solution in the general case.

\section{Proofs.}

\begin{proof}[Proof of Lemma ~\ref{l1}.]
Let $P_1,P_2\in \mathbb{R}[x,y]$ and $Q=\gcd(P_1,P_2)$. The proof is based on the following fact (which can be found, for example, in
\cite[p.~16]{F08}): if $\deg Q=0$, then the intersection of the sets
$\varGamma_1=\{(x,y):P_1(x,y)=0\}$ and $\varGamma_2=\{(x,y):P_2(x,y)=0\}$ can only contain a finite set
of points. Therefore, if $\deg Q\leq n$, then the intersection of the sets
$\varGamma_1$ and $\varGamma_2$
cannot contain an algebraic curve of order greater than $n$. Since
$\deg\gcd(P_1/Q,P_2/Q)=0$, the intersection of $\varGamma_1$ and $\varGamma_2$
consists of the set $ \{(x,y):Q(x,y)=0\}$ and a finite set of points, therefore,
it cannot contain an algebraic curve of order higher than $n$.

Let a polynomial $R\in\mathbb{C}[x,y]$ of degree $n$ define an algebraic curve $\varGamma$
of order $n$. Since $R\in\mathbb{C}[x,y]$, then $R=R_1+iR_2$ for some
$R_1,R_2\in\mathbb{R}[x,y]$, and the curve $\varGamma$ is contained in the
intersection of the sets $\{(x,y):R_1(x,y)=0\}$ and $\{(x,y):R_2(x,y)=0\}$.
Thus, $\deg\gcd(R_1,R_2)=n$, $\deg R_1=\deg R_2=n$, $R_2=\delta R_1$ for some 
$\delta\in\mathbb R$, and, finally, $R=(1+i\delta)R_1$.

If the polynomial $P\in\mathbb{C}[x,y]$ of degree $m$ define the same curve
$\varGamma$, then $\deg\gcd(\gcd(P_1,P_2),R_1)=n$, where $P=P_1+iP_2$ for
some $P_1,P_2\in\mathbb{R}[x,y]$. Thus, we obtain that $P$ is divisible by
$R_1$.
\end{proof}

\begin{proof}[Proof of Lemma~\ref{l2}.]
Let $P\in\mathbb{C}[x,y]$, and
$\widetilde{P}=S_{\beta}\widetilde{S_{\beta}}P$. The polynomials $P$ and
$\widetilde{P}$ are equal as functions from $\mathbb{R}^2$ to $\mathbb{R}^2$,
therefore, the coefficients of $P$ and $\widetilde{P}$ coincide
\cite[p.~115]{Z64}, hence $S_\beta$ is invertible. Moreover, let $P=QT$,
where $Q,T\in\mathbb{C}[x,y]$, then the functions $P$ and
$\widetilde{S_{\beta}}(S_{\beta}Q S_{\beta}T)$ are similarly equal. Taking
into account the invertibility of $S_{\beta}$, we have
$S_{\beta}(QT)=S_{\beta}Q S_{\beta}T$, therefore, $S_{\beta}$ is an
automorphism.
\end{proof}

\begin{proof}[Proof of Theorem~\ref{th2}.]
Suppose the domain $D$ is $L$-special with an admissible pair $(F_1,F_2)$,
where $F_1$ and $F_2$ are polynomials of a complex variable, and assume that
$\max(\deg F_1,\deg F_2)\leq n$. Let $G(z,z_\beta)=F_1(z)-F_2(z_{\beta})$. If
$\max\deg(F_1,F_2)<n$, then $\widetilde{S_{\beta}}G$ defines $\varGamma$,
which contradicts to the fact that the order of $\varGamma$ equals to $n$.
Therefore, in what follows we will assume  that
$\max\deg(F_1,F_2)=\deg G=n$.

Let $P\in\mathbb{R}[x,y]$ define $\varGamma$ and $\deg P=n$. Then by
Lemma~\ref{l1} we have $\widetilde{S_{\beta}}G=\gamma P$ for some
$\gamma\in\mathbb{C}$. Define $\widetilde{G}=\gamma^{-1} G$, then
$\widetilde{S_{\beta}}\widetilde{G}=P$.

Consider the case when $n$ is even. From the definition of $G$ it follows 
that
$$
\widetilde{G}(z,z_{\beta})=
(\alpha_1+i\alpha_2)z^n+(\alpha_3+i\alpha_4)z_\beta^n+H(z,z_\beta),
$$
where $H$ has degree less than $n$. Since $P\in\mathbb{R}[x,y]$, the 
imaginary parts of all the coefficients of $P$ vanish. But
$\widetilde{S_{\beta}}\widetilde{G}=P$, whence,  by equating the
imaginary parts of the coefficients of $\widetilde{S_{\beta}} \widetilde{G}$
and $P$ at $x^n$, $x^{n-1}y$, $xy^{n-1}$, and $y^{n}$, we obtain 
\begin{equation*}
\alpha_2+\alpha_4=0,\enspace\alpha_2+\beta^{-n}\alpha_4=0,\enspace\alpha_1+\beta^{-1}\alpha_3=0,\enspace\alpha_1+\beta^{1-n}\alpha_3=0
\end{equation*}

From these equalities we have $\alpha_1=\alpha_2=\alpha_3=\alpha_4=0$, but
this contradicts to the fact that the degree of $G$ is equal to $n$. In the case
of odd $n$, the proof is similar.
\end{proof}

\begin{proof}[Proof of Theorem~\ref{th3}]
Let the polynomial $G(z,z_\beta)=F_1(z)-F_2(z_\beta)$ define $\varGamma$. If
$\widetilde{S}_\beta G=H(x,y)$, then, according to Lemma~\ref{l1},
$H(x,y)=(P(x,y)+C)R(x,y)$ for some $R\in\mathbb{C}[x,y]$ of degree $k$. In
view of Lemma~\ref{l2}, $S_\beta$ is an automorphism, therefore, $G=(S_\beta P+C)\,
S_\beta R$. For an arbitrary polynomial $Q\in\mathbb{C}[x,y]$ and arbitrary
positive integer $m\leq\deg Q$, we denote by $Q_{(m)}$ the sum of the
homogeneous monomials of degree $m$ containing in $Q$. Since $G_{(k)}$ is a
diagonal polynomial, and the degree of $R$ is $k<n$, then $S_\beta R_{(k)}$
is also a diagonal polynomial. On the other hand,
$$G_{(n+k)}=S_\beta P_{(n)}\, S_\beta R_{(k)},$$

Thus, taking into account Lemma~\ref{l3}, we arrive at a contradiction with
the fact that $n$ is not divisible by $k$.
\end{proof}

\begin{proof}[Proof of Theorem~\ref{th4}.]
Recall that the desired domain is the interior of the bounded connected
component of the complement to the curve defined by the equation $P(x,y)=1$
for $P(x,y)$ given by \eqref{pol}.

The polynomial $P(x,y)$ can be rewritten in the following form:
$$
P(x,y)=(x-(\alpha-i\alpha^{*})y)(x-(\alpha+i\alpha^{*})y)
(x-(-\alpha+i\alpha^{*})y)(x-(-\alpha-i\alpha^{*})y),
$$
where $\alpha^{*2}+\alpha^2=\beta^{-1}$ and $\alpha^{*}>0$ only depends  on
$\alpha$.

Let us check that the pair of polynomials $(F_1,F_2)$, where
$$F_1(z)=Cz^5-z,\quad
F_2(z_\beta)=C\gamma^5z_\beta^5-\gamma z_\beta $$
with some suitable $C$ and $\gamma$, can be taken as an admissible pair for
$D$. The equality $F_1(z)=F_2(z_\beta)$ is equivalent to the equality
$(z-\gamma z_\beta)(G(z,z_\beta)-1)=0$ with 
$G(z,z_\beta)=C(z^5-\gamma^5z_\beta^5)/(z-\gamma z_\beta)$.

Since
$$
G=C(z-e^{i\phi}\gamma z_\beta)(z-e^{2i\phi}\gamma z_\beta)(z-e^{3i\phi}\gamma z_\beta)
(z-e^{4i\phi}\gamma z_\beta),
$$
where $\phi=2\pi/5$, it is sufficient to find $\beta$, $\alpha$, $\gamma$,
$C_{p,q}$, $p,q=0,1$, such that
$$
S_{\beta}(x-((-1)^p\alpha+(-1)^q i\alpha^{*})y)=C_{p,q}(z-e^{i f(p,q)\phi}\gamma z_\beta),
$$
where
$
f(0,0)=2$, $f(0,1)=1$, $f(1,0)=3$, $f(1,1)=4.
$

Firstly, we find $C_{p,q}$ in the form of a suitable expressions of $\alpha$
and $\beta$. Direct calculations show that
\begin{multline*}
S_{\beta}((x-((-1)^p\alpha+(-1)^q i\alpha^{*})y))=\\=
(1-\beta)^{-1}\Bigl((1+(-1)^q\beta\alpha^{*}+(-1)^{p+1}i\beta\alpha)z-
(\beta+(-1)^q\beta\alpha^{*}+(-1)^{p+1}i\beta\alpha)z_\beta\Bigr),
\end{multline*}
whence
$C_{p,q}=(1-\beta)^{-1}(1+(-1)^q\beta\alpha^{*}+(-1)^{p+1}i\beta\alpha)$.
Thus,
$$
S_{\beta}((x-((-1)^p\alpha+(-1)^q i\alpha^{*})y))=C_{p,q}
(z-\gamma_{p,q}(\alpha,\beta)z_\beta)
$$
for
$$
\gamma_{p,q}(\alpha,\beta)=\frac{\beta+(-1)^q\beta\alpha^{*}+(-1)^{p+1}i\beta\alpha}
{1+(-1)^q\beta\alpha^{*}+(-1)^{p+1}i\beta\alpha}.
$$

Now we will find $\alpha$ (depending on $\beta$) satisfying both the
equalities
\begin{equation}\label{g00}
\gamma_{0,0}(\alpha,\beta)=e^{i\phi}\gamma_{0,1}(\alpha,\beta),
\end{equation}
\begin{equation}\label{g11}
\gamma_{1,1}(\alpha,\beta)=e^{i\phi}\gamma_{1,0}(\alpha,\beta).
\end{equation}
Taking into account the condition $\alpha^{*2}+\alpha^2=\beta^{-1}$ we see that the equation \eqref{g00}
is equivalent to
$$
(1-\beta)\alpha^{*}+i(-\alpha-\beta\alpha)=e^{i\phi}((\beta-1)\alpha^{*}+i(-\alpha-\beta\alpha)).
$$
Thus, $\alpha=\alpha(\beta)$ and $\alpha^{*}=\alpha^*(\beta)$ are connected by the relation
\begin{equation}\label{one}
\alpha^{*}(\beta)=\frac{(1+\beta)(1-\cos\phi)}{(1-\beta)\sin\phi}\,
\alpha(\beta).
\end{equation}
Moreover, for such $\alpha$ and $\alpha^{*}$ the equality \eqref{g11}
also holds. Note that
using~\eqref{one} and conditions $\alpha^{*2}+\alpha^2=\beta^{-1},\alpha>0$ and
$\alpha^{*}>0$, we can obtain an explicit formula, expressing $\alpha$ and
$\alpha^{*}$ in terms of $\beta$.

Next, we will find such $\beta_0$ that
$$
\gamma_{1,0}(\alpha(\beta_0),\beta_0)=e^{i\phi}\gamma_{0,0}(\alpha(\beta_0),\beta_0).
$$
Then we take $\gamma=e^{-i\phi}\gamma_{0,1}$ and the theorem is proven.

Let us prove the existence of $\beta_0$ such that
$$
\frac{\gamma_{1,0}(\alpha(\beta_0),\beta_0)}{\beta_0}=
e^{i\phi}\frac{\gamma_{0,0}(\alpha(\beta_0),\beta_0)}{\beta_0}
$$
Let $g_1(\beta)=\gamma_{1,0}(\alpha(\beta),\beta)/\beta$ and
$g_2(\beta)=e^{i\phi}\gamma_{0,0}(\alpha(\beta),\beta)/\beta$.

From the condition $\alpha^{*2}+\alpha^2=\beta^{-1}$  it follows that
$|g_1(\beta)|=|g_2(\beta)|=\sqrt{\beta}$. So, it suffices to find $\beta_0$
which is a root of the equation $\Re g_1(\beta)=\Re g_2(\beta)$ and, at the
same time, $\Im g_1(\beta_0)\Im g_2(\beta_0)>0$ holds.

The values of $\Re g_1$ and $\Re g_2$ can be directly calculated for $\beta=0.01$ and $\beta=0.1$. We have 
$$
\Re g_1(0.01)\approx 7.09,\enspace\Re g_2(0.01)\approx 8.89,\enspace
\Re g_1(0.1)\approx 2.83,\enspace \Re g_2(0.1)\approx 2.21.
$$

The functions $\alpha(\beta)$ and $\alpha^{*}(\beta)$ depend continuously on
$\beta$ for $0.01\leq\beta\leq0.1$; moreover, for such $\beta$ these
functions are positive. Therefore, $g_1(\beta)$ and $g_2(\beta)$ are also
continuous. Thus, from the intermediate value
theorem we deduce that there exists $0.01<\beta_0<0.1$  such that $\Re g_1(\beta_0)=\Re g_2(\beta_0)$.

On the other hand, for $0.01<\beta<0.1$ we have
\begin{align*}
\Im g_1(\beta)|g_1(\beta)|^2&=(1-\beta)\alpha(\beta)>0,\\
\Im g_2(\beta)|g_2(\beta)|^2&=
(2+(1+\beta)\alpha^{*}(\beta))\sin\phi+(\beta-1)\alpha(\beta)\cos\phi=\\
&=\Bigl (\frac{(1+\beta)^2}{1-\beta}(1-\cos\phi)+(\beta-1)\cos\phi\Bigr )
\alpha(\beta)+2\sin\phi>0.
\end{align*}
The theorem is proved.
\end{proof}

Next we will illustrate the construction given in the proof of Theorem~\ref{th4} by
some suitable picture. Direct computations  show that the values of $\beta$ and $\alpha$ found in the proof of Theorem~\ref{th4} are 
$\beta\approx0.039$, $\alpha\approx3.96$ and the curve $\varGamma$ is close
with respect to the Hausdorff metric to the curve $\widetilde{\varGamma}$
defined by the equation
\begin{equation*}
x^4+34.913x^2y^2+ 643.992y^4=1
\end{equation*}
and presented at the following picture.

\includegraphics[width=1\linewidth]{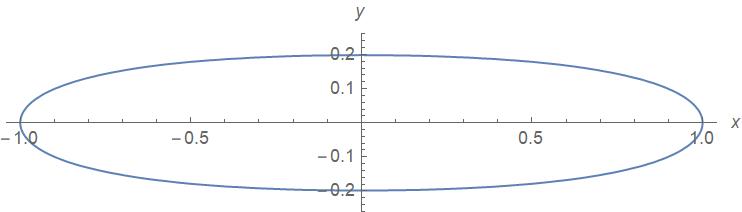}

Note that at the present moment the example constructed at Theorem~\ref{th4} is
unique, that is, the question of applying this method for the construction of
other $L$-special domains with fourth-order boundaries remains open.


\begin{thebibliography}{}

\bibitem{Z02} A.~B.~Zaitsev,  ''On the uniform approximability of functions by polynomials of special classes on compact sets in $\mathbb R^2$,''
Math. Notes \textbf{71}(1--2), 68--79 (2002).

\bibitem{Z03} A.~B.~Zaitsev, ''On the uniform approximability of functions by polynomial solutions of second-order elliptic equations on compact sets in $\mathbb R^2$,''
Math. Notes \textbf{74}(1--2), 38--48 (2003).
	
\bibitem{Z04} A.~B.~Zaitsev, ''On the uniform approximability of functions by polynomial solutions of second-order elliptic equations on planar compact sets,''
Izv. Math. \textbf{68}(6), 1143--1156 (2004).

\bibitem{BFM24} A.~Bagapsh, K.~Fedorovskiy, and M.~Mazalov, ''On Dirichlet problem and uniform approximation by solutions of second-order elliptic systems in $\mathbb{R}^2$, Journal of Mathematical Analysis and Applications, \textbf{531}, paper no.~127896, 26 pp. (2024).

\bibitem{B23}  A.~O.~Bagapsh, ''Perturbation method for strongly elliptic second order systems with constant coefficients,'' Ufa Math. J., \textbf{15}(4), 21--30 (2023).

\bibitem{BF22} A.~O.~Bagapsh and K.~Yu.~Fedorovskiy, On energy functionals for second order elliptic systems with constant coefficients, Ufa Math. J., \textbf{14}(4), 14--25 (2022).

\bibitem{F08} W.~Fulton, \emph{Algebraic Curves: an Introduction to Algebraic
Geometry (Advanced Book Classics)} (Redwood City, CA, USA: Addison-Wesley, 1989).

\bibitem{Z64} S.~T.~Zavalo, \emph{Elementary Algebra} (Moscow, Prosveschenie, 1964). [in Russian]

\end{thebibliography}
\end{document}